\newtheorem{theorem}{Theorem}[section]
\newtheorem{lemma}[theorem]{Lemma}
\newtheorem{proposition}[theorem]{Proposition}
\theoremstyle{definition}
\newtheorem{remark}[theorem]{Remark}
\newcommand{\del}{\partial}
\newcommand{\Z}{\mathbb{Z}}
\newcommand{\R}{\mathbb{R}}
\newcommand{\Q}{\mathbb{Q}}
\newcommand{\C}{\mathbb{C}}
\newcommand{\CP}{\mathbb{CP}}
\newcommand{\M}{\mathcal{M}}
\newcommand{\U}{\mathcal{U}}
\renewcommand{\O}{\mathcal{O}}
\DeclareMathOperator{\can}{can}
\DeclareMathOperator{\st}{st}
\DeclareMathOperator{\ev}{ev}
\DeclareMathOperator{\FS}{FS}
\DeclareMathOperator{\Ker}{Ker}
\DeclareMathOperator{\PD}{PD}
\DeclareMathOperator{\Int}{Int}
\DeclareMathOperator{\Hom}{Hom}
\title[Symplectic fillings of unit cotangent bundles]{Symplectic fillings of unit cotangent bundles of spheres and applications}
\author[Myeonggi Kwon]{Myeonggi Kwon}
\address{Department of Mathematics Education, and Institute of Pure and Applied Mathematics, Jeonbuk National University, Jeonju 54896, Republic of Korea}
\email{mkwon@jbnu.ac.kr}
\author[Takahiro Oba]{Takahiro Oba}
\address{Department of Mathematics, The University of Osaka, 1-1 Machikane-yama, Toyonaka, Osaka 560-0043, Japan}
\email{taka.oba@math.sci.osaka-u.ac.jp}
\date{\today}
\begin{document}

\maketitle

\begin{abstract}
We prove the uniqueness, up to diffeomorphism, of symplectically aspherical fillings of the unit cotangent bundle of odd-dimensional spheres. 
As applications, we first show the non-existence of exact symplectic cobordisms between some $5$-dimensional Brieskorn manifolds. 
We also determine the diffeomorphism types of closed symplectic $6$-manifolds with certain codimension $2$ symplectic submanifolds. 
\end{abstract}

\section{Introduction}\label{section: intro}

This paper is concerned with the topology of symplectic fillings of a given contact manifold. 
Based on our interest, throughout the paper, symplectic filling always means \textit{strong symplectic filling}. 
We recall that a strong symplectic filling of a (cooriented) contact manifold $(Y,\xi)$ is a compact connected symplectic manifold $(W,\omega)$ satisfying that $\del W=Y$, and there exists a Liouville vector field $X$ for $\omega$, defined near and pointing outward along $\del W$, which satisfies $\xi=\Ker(i_{X}\omega|_{T\del W})$ (as cooriented contact structure). 
We require $(W,\omega)$ to be \textit{symplectically aspherical}, i.e. $[\omega]|_{\pi_2(W)}=0$, which suffices to ensure minimality of symplectic fillings $(W,\omega)$. 

Gromov \cite{Gro}, Eliashberg \cite{Eli90} and McDuff \cite{McDRational} independently showed that the standard contact $3$-sphere $(S^3, \xi_{\mathrm{st}})$ has a unique symplectically aspherical filling up to diffeomorphism. 
The uniqueness result for $(S^{2n-1}, \xi_{\mathrm{st}})$, $n \geq 3$, was proven by Eliashberg--Floer--McDuff \cite{McDContact}.   
The standard contact sphere $(S^{2n-1}, \xi_{\mathrm{st}})$ serves as the simplest example of a \textit{subcritically} Stein fillable contact manifold. 
Barth--Geiges--Zehmisch \cite{BGZ} showed the uniqueness of diffeomorphism types of symplectically aspherical fillings of a simply connected subcritically Stein fillable contact manifold $(Y,\xi)$ of $\dim Y \geq 5$.

A primary example of a \textit{critically} Stein fillable contact manifold is the unit cotangent bundle $ST^*B$ of a closed Riemannian manifold $B$ equipped with the canonical contact structure $\xi_{\can}$. 
This contact manifold has a natural symplectic filling, namely the disk cotangent bundle $DT^*B$ with the canonical symplectic structure $\omega_{\can}$. 
The question is whether there exists another symplectic filling. 
In the case $\dim B=2$, the answer to this question is somewhat well understood. 
McDuff \cite{McDRational} showed that all of symplectically aspherical fillings of $(ST^*S^2, \xi_{\can})$ are diffeomorphic to $DT^*S^2$, later which was strengthened to uniqueness up to Stein homotopy by Hind \cite{Hindlens}.
For $(ST^*T^2, \xi_{\can})$, Stipsicz \cite{Sti} showed that any Stein filling is homeomorphic to $DT^*T^2$, and then Wendl showed that $(DT^*T^2, \omega_{\can})$ is the unique symplectically aspherical filling up to symplectic deformation. 
Even in the case where $B$ is an orientable higher-genus or non-orientable surface, topological restrictions of symplectic fillings of $(ST^*B, \xi_{\can})$ are studied: \cite{LMY}, \cite{SVHM}, \cite{LinOz} and \cite{GZ}. 
In contrast, for the case $\dim B \geq 3$, the topology of symplectic fillings of $(ST^*B, \xi_{\can})$ is less explored. 
For $B=T^n$, Bowden--Gironella--Moreno \cite{BGM} and independently Geiges--Kwon--Zehmisch \cite{GKZ} showed the uniqueness of symplectically aspherical fillings up to diffeomorphism. 
In addition, Kwon--Zehmisch \cite{KZ} provided a uniqueness result for the diffeomorphism types of symplectically aspherical fillings of $(ST^*S^{2d+1}, \xi_{\can})$ ($d \geq 1$), assuming the existence of a certain complex hypersurface. 

The main theorem in this paper is a uniqueness result for symplectically aspherical fillings of $(ST^*S^{2d+1}, \xi_{\can})$, 
which has also recently been obtained by Bimmermann--Stratmann--Zehmisch \cite{BSZ}. 

\begin{theorem}\label{thm: uniqueness}
For any integer $d \geq 1$, let $(W, \omega)$ be a symplectically aspherical filling of the unit cotangent bundle $(ST^*S^{2d+1}, \xi_{\can})$.
Then, $W$ is diffeomorphic to $DT^*S^{2d+1}$.  
\end{theorem}

This theorem will be proved in Section~\ref{section: topology of fillings} after we discuss moduli spaces of holomorphic spheres in Section~\ref{section: holomorphic curves}. 
The present proof corrects and simplifies the earlier argument in the previous version and is inspired by the techniques of \cite{KZ, BSZ}. 
Following the approach of \cite{KZ, BSZ}, we construct a \textit{symplectic cap} of $(ST^*S^{2d+1},\xi_{\can})$ using the Audin--Lalonde--Polterovich \cite{ALP} Lagrangian embedding of $S^{2d+1}$ to close off the boundary of a symplectically aspherical filling $(W,\omega)$ of $(ST^{*}S^{2d+1}, \xi_{\can})$. 
We then study moduli spaces of holomorphic spheres in the resulting capped manifold. 
Applying the so-called degree method (see e.g.\ \cite{McDContact, BGZ}) to these moduli spaces yields topological information about $(W,\omega)$, which determines its fundamental group and homology. 
Finally, combining these computations with a result of \cite{KZ} establishes Theorem~\ref{thm: uniqueness}. 
A notable feature of our argument is that the properness of an evaluation map from a moduli space can be shown more directly than in \cite{BSZ}; see Lemma~\ref{compactness} and Remark~\ref{rmk: BSZ}.

We would like to note that, in a forthcoming paper \cite{KO_pi_1}, the authors prove that symplectically aspherical fillings of $(ST^*S^n,\xi_{\can})$ are simply connected for all $n \geq 3$ by using a different method.

In Section \ref{section: applications}, we will give two applications of the main theorem. 
The first application is to exact symplectic cobordisms. 
For each integer $\ell \geq 1$, define
$$
	Y_\ell = \{ (z_1,z_2,z_3, z_4) \in \C^4 \mid z_1^{2\ell}+z_2^2+z_3^2+z_4^2=0\} \cap S^7, 
$$
which is the so-called Brieskorn manifold of the singularity $z_1^{2\ell}+z_2^2+z_3^2+z_4^2$. 
We equip it with the contact structure $\xi_{\ell}$ induced by the standard one on $S^7$. 
Note that $(Y_1, \xi_1)$ is contactomorphic to $(ST^*S^3, \xi_{\can})$. 
It is known that all of the smooth manifolds $Y_{\ell}$ are diffeomorphic to $ST^*S^3$. 
In contrast, all of the contact manifolds $(Y_\ell, \xi_{\ell})$ are mutually \textit{non-contactomorphic} by a result of Uebele \cite{Ueb}. 
Therefore, there is no obstruction to the existence of smooth cobordisms between manifolds $Y_\ell$, and, moreover, there is an exact symplectic cobordism from $(Y_\ell, \xi_\ell)$ to $(Y_{\ell'}, \xi_{\ell'})$ if $\ell \leq \ell'$. 
However, unlike smooth cobordisms, the latter fact does not automatically guarantee the existence of a reverse exact symplectic cobordism; in fact, Kwon--van Koert \cite{KvK} posed a question about this existence. 
We partially answer this question in Theorem \ref{thm: cobordism}: there is no exact symplectic cobordism from $(Y_{\ell}, \xi_{\ell})$ to $(Y_1, \xi_1)$ for any $\ell\geq 2$.
A similar argument proves the non-existence of exact symplectic cobordisms from the exotic contact $5$-spheres studied in \cite{Usti} to the standard $5$-sphere $(S^5, \xi_{\mathrm{st}})$; see Remark \ref{rmk: sphere case}.

We will present another application of Theorem \ref{thm: uniqueness} in Section \ref{section: 6-manifolds}.   
McDuff \cite{McDRational} determined the symplectomorphism types of closed and connected symplectic $4$-manifolds containing  symplectic spheres with non-negative self-intersection. 
This reveals that the existence of a certain type of symplectic submanifolds constrains the topology of ambient symplectic $4$-manifolds. 
We capture a similar phenomenon in dimension $6$. 
A codimension $2$ symplectic submanifold $\Sigma$ of a closed symplectic manifold $(M, \Omega)$ with $[\Omega] \in H^2(M;\Z)$ is called a \textit{symplectic hyperplane section} on $(M, \Omega)$ if the cohomology class $[\Omega]$ is Poincar\'e dual to $[\Sigma]$. 
As the main result of Section \ref{section: 6-manifolds}, we prove that if a closed symplectic $6$-manifold $(M,\Omega)$ with $[\Omega] \in H^2(M;\Z)$ contains $(\CP^1 \times \CP^1, \omega_{\FS}\oplus \omega_{\FS})$ as a symplectic hyperplane section, then $M$ is diffeomorphic to a complex $3$-dimensional quadric (see Theorem \ref{thm: classification}). 
Note that replacing $(\CP^1 \times \CP^1, \omega_{\FS}\oplus \omega_{\FS})$ by $(\CP^2, \omega_{\FS})$ yields a similar result for this symplectic $4$-manifold; see Remark \ref{rem: CP^2}.
We would like to mention that B\v{a}descu \cite{Ba} 
showed an analogous result for complex projective $3$-folds.


\section{Moduli spaces of holomorphic spheres}\label{section: holomorphic curves}

\subsection{Closed symplectic manifold containing a symplectic filling}\label{section: closed}

Let $(W, \omega)$ be a symplectically aspherical filling of $(ST^{*}S^{2d+1}, \xi_{\can})$ with $d \geq 1$. 
In this paper, the holomorphic spheres under consideration lie in a closed symplectic manifold which contains $(W,\omega)$ as a symplectic submanifold. 
Such a manifold can be constructed using Lagrangian embeddings of odd-dimensional spheres due to Audin--Lalonde--Polterovich \cite{ALP}.


Consider the hyperplane bundle $\O_{\CP^d}(1)\to \CP^d$. 
Its first Chern class is represented by $\omega_{\FS}$, where $\omega_{\FS}$ denotes the normalized Fubini--Study form on $\CP^d$, defined so that
\begin{align*}
  \int_{\CP^1}\omega_{\FS}=1.
\end{align*}
There exists a disk bundle $N(\CP^d)\subset \O_{\CP^d}(1)$ equipped with a symplectic form $\Omega_0$ such that it restricts to $\omega_{\FS}$ on the zero-section and admits a Liouville vector field, defined away from the zero-section, which is tangent to each fiber and points inward along the boundary; in particular, the boundary $\partial N(\CP^d)$ is concave. 
Reversing the orientation of the fibers, $\partial N(\CP^d)$ becomes the Boothby--Wang bundle associated with the symplectic manifold $(\CP^d,\omega_{\FS})$; in fact, it is contactomorphic to the standard sphere $(S^{2d+1},\xi_{\st})$. 

Consider the closed disk $D^{2d+2}(1/\sqrt{\pi})\subset \C^{d+1}$ of radius $1/\sqrt{\pi}$, equipped with the standard symplectic form $\omega_{\st}$. 
Using Liouville flows near $\partial D^{2d+2}(1/\sqrt{\pi})$ and $\partial N(\CP^d)$, we can glue $D^{2d+2}(1/\sqrt{\pi})$ and $N(\CP^d)$ symplectically to obtain the closed symplectic manifold 
\[
  (M_0 = D^{2d+2}(1/\sqrt{\pi}) \cup N(\CP^d), \, \omega_0).
\]
(See \cite[Section 2.3]{KO} for further details on the construction of such glued symplectic manifolds.)

Now as in \cite{KZ} and \cite{BSZ}, consider the Lagrangian embedding of the unit sphere $S^{2d+1}=\{ z \in \C^{d+1} \mid \| z\| =1\}$ into $(D^{2d+2}(1/\sqrt{\pi}) \times \CP^d, \omega_{\st} \oplus \omega_{\FS})$, introduced by \cite{ALP}, which is defined by 
\begin{equation}\label{Lagr}
	\varphi \colon S^{2d+1} \to D^{2d+2}(1/\sqrt{\pi}) \times \CP^d, \quad \varphi(z)=(z/\sqrt{\pi}, (\overline{z})), 
\end{equation}
where  $\omega_{\st} \oplus \omega_{\FS}$ denotes the symplectic form $\mathrm{pr}_1^*\omega_{\st}+\mathrm{pr}_2^*\omega_{\FS}$ on $D^{2d+2}(1/\sqrt{\pi}) \times \CP^d$, where $\mathrm{pr}_i$ is the projection to the $i$-th factor. 
Since the inclusion $\iota \colon D^{2d+2}(1/\sqrt{\pi}) \times \CP^d \hookrightarrow M_0 \times \CP^d$ satisfies $\iota^{*} (\omega_0 \oplus \omega_{\FS}) = \omega_{\st} \oplus \omega_{\FS}$,
the image $L \coloneqq \varphi(S^{2d+1})$ is still Lagrangian in $(M_0 \times \CP^d, \omega_0 \oplus \omega_{\FS})$.  
Take a Weinstein tubular neighborhood $N(L)$ of the Lagrangian sphere $\varphi(S^{2d+1})$ in $M_0 \times \CP^d$ and replace it by $(W, \omega)$ symplectically. 
We denote the resulting glued symplectic manifold by 
\[ 
	(X=((M_0 \times \CP^d) \setminus N(L)) \cup W, \Omega). 
\]
Letting $\Sigma$ be the image of the zero-section of $\O_{\CP^d}(1) \to \CP^d$, 
we may assume that $X$ contains $\Sigma \times \CP^d \subset M_0 \times \CP^d$ by choosing $N(\varphi(S^{2d+1}))$ sufficiently thin.

\subsection{The moduli spaces}

To define moduli spaces of holomorphic spheres, we take an $\Omega$-tame almost complex structure $J$ on $X$ which agrees with the integrable almost complex structure $J_0$ on $N(\CP^d) \times \CP^d$ inherited from $\O_{\CP^d}(1) \times \CP^d$. 
Take a point $p_0 \in \Sigma$. 
Write $C$ for the second homology class of $X$ represented by a generic complex line in $\Sigma$. 

Now consider the moduli space $\widetilde{\M}$ consisting of all (parametrized) $J$-holomorphic spheres 
$u \colon \CP^1 \rightarrow X$ such that $[u]\coloneqq u_*([\CP^1])=C \in H_{2}(X;\Z)$ and 
$u(0) \in \{p_0\} \times \CP^d$: 
$$
	\widetilde{\M} = \left\{ 
	\begin{gathered}
	u \in W^{1,p}(\CP^{1}, X) \mid u \textrm{ is $J$-holomorphic, }[u]=C  \textrm{ and }u(0) \in \{p_0\} \times \CP^d 
	\end{gathered}
	\right\}.
$$
The group $G=\mathrm{Aut} (\CP^1; 0)$ of automorphisms of $\CP^1$ fixing $0$ freely acts on $\widetilde{\M}$ by 
$$
	\phi \cdot u \coloneqq u \circ \phi^{-1}
$$
for $\phi \in G$ and $u  \in \widetilde{\M}$; similarly, $G$ also acts on $\widetilde{\M} \times \CP^1$ by 
$$
	\phi \cdot (u, z) \coloneqq (u \circ \phi^{-1},  \phi(z)).
$$ 
Note that for $(u,z) \in \M \times \CP^1$, the marked point $z$ may agree with $0$. 
Set  
$$
	\M =\widetilde{\M}/G \quad \mathrm{and} \quad \U = (\widetilde{\M} \times \CP^1)/G. 
$$
We write $u$ for its equivalence class in $\M$ whenever no ambiguity arises. 

\begin{lemma}
The moduli spaces $\M$ and $\U$ are smooth manifolds of dimension $4d$ and $4d+2$, respectively, for a generic $J$. 
\end{lemma}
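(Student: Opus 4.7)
The strategy is to combine the Fredholm index computation for $J$-holomorphic spheres with a somewhere-injectivity argument tailored to the constraint that $J = J_0$ be fixed on $N(\Sigma)$. For the dimensions, adjunction along the $J$-invariant hypersurface $\Sigma\subset M$ gives
\[
    c_1(TM)|_\Sigma = c_1(T\Sigma) + c_1(N_{\Sigma/M}) = \bigl((n-1) + 1\bigr)\, H|_\Sigma = n\,H|_\Sigma,
\]
so $c_1(TM)[A+B] = 2n$ since $H|_\Sigma[A+B] = 2$. The Fredholm index of the parametrized linearized Cauchy--Riemann operator on a sphere in class $A+B$ is therefore $2n + 2\cdot 2n = 6n$; attaching two marked points contributes $+4$, imposing $u(z_i) = p_i$ contributes $-4n$, and quotienting by the six-dimensional group $G = \mathrm{Aut}(\CP^1)$ yields $\dim \M = 2n - 2$. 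Appending one more free marked point gives $\dim \M_z = 2n$.

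\emph{Somewhere injectivity.} To invoke standard transversality, every $u\in\widetilde{\M}$ must be simple. If $u = v\circ\phi$ for a simple $v$ and a holomorphic branched cover $\phi$ of degree $k\geq 2$, then $k[v] = A+B$ in $H_2(M;\Z)$. For $n\geq 4$, the Lefschetz hyperplane theorem forces $[v] = A$ and $k = 2$; but the expected dimension of the moduli of $A$-curves through both $p_0$ and $p_\infty$ is $(4n - 6) + 4 - 4n = -2$, so for generic $J$ no such $v$ exists. For $n = 3$, positivity of intersection with $\Sigma$ combined with the computation $(A+B)\cdot[\Sigma] = 2$ forces $k = 2$ and $[v]\cdot[\Sigma] = 1$, so $v^{-1}(\Sigma)$ is a single point; this point must then equal both $\phi(z_0)$ and $\phi(z_\infty)$, whence $p_0 = v(\phi(z_0)) = v(\phi(z_\infty)) = p_\infty$, contradicting $p_0 \neq p_\infty$.

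\emph{Transversality and freeness.} With simplicity of every $u\in\widetilde{\M}$ established, the Sard--Smale argument of McDuff--Salamon applied to perturbations of $J$ compactly supported in $M\setminus N(\Sigma)$ yields a comeager set of $J$ for which $\widetilde{\M}$ is smooth and the evaluation at $(z_0,z_\infty)$ is transverse to $(p_0,p_\infty)$. For curves contained entirely in $\Sigma$, the linearized operator at $J_0$ coincides with the Dolbeault $\bar\partial$ on $u^*T\Sigma$, and an automatic transversality argument on the Fano variety $Q^{n-1}$ (using the positivity $c_1(T\Sigma)(A+B) = 2(n-1) > 0$) shows that the rigid $J_0$ is already regular on this stratum. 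Finally, $G$ acts freely on $\widetilde{\M}$, since a nontrivial stabilizer would force $u$ to be a branched multiple cover with branch locus contained in $\{z_0,z_\infty\}$, which the simplicity step excludes. The main obstacle I anticipate is the somewhere-injectivity step for $n\geq 4$: because $A+B = 2A$ is not primitive, ruling out double covers of $A$-curves through both $p_0$ and $p_\infty$ must be done by a dimension argument that remains valid under the rigidity of $J$ on $N(\Sigma)$, and the transversality of the evaluation map on the $A$-curve moduli must be arranged using only perturbations supported away from $\Sigma$.
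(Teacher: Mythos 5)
Your dimension count and overall structure match the paper's: the paper also computes $c_1(TM)[A+B]=2n$ (via the explicit splitting of $TM|_{\CP^1}$ into $\O(2)\oplus\O(1)^{\oplus(n-2)}\oplus\O\oplus\O(1)$ rather than adjunction, but the result is the same), deduces $\dim\M=2n-2$, invokes automatic regularity for curves in $N(Q^{n-1})$ via the splitting and \cite[Lemma 3.3.1]{MSBook}, and extends $J_0$ generically off $N(Q^{n-1})$ using \cite[Theorem 4.4.3]{Wen_Lecture}. The paper's simplicity step is delegated to the analogue of \cite[Lemma 2.2]{GN}, and this is where your proposal has a genuine gap.

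The gap is in the somewhere-injectivity argument, specifically the case where the simple underlying curve $v$ has image \emph{contained in} $\Sigma$. Your $n=3$ argument asserts that $[v]\cdot[\Sigma]=1$ implies $v^{-1}(\Sigma)$ is a single point; this positivity-of-intersections conclusion is only valid when $v(\CP^1)\not\subset\Sigma$. If $v(\CP^1)\subset\Sigma$, the intersection number computes the degree of the normal bundle and $v^{-1}(\Sigma)=\CP^1$. The correct way to dispose of this case for $n=3$ is primitivity: if $k[v]=A+B$ in $H_2(Q^2;\Z)\cong\Z^2$ with $k\geq 2$, then $[v]=\tfrac1k(A+B)$ is not integral, a contradiction. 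For $n\geq 4$ the situation is worse: your dimension argument ("for generic $J$ no such $v$ exists") does not apply to $v$ contained in $\Sigma$, because $J|_{N(\Sigma)}=J_0$ is \emph{fixed}, so you cannot perturb $J$ to kill $A$-lines inside $Q^{n-1}$ — indeed such lines exist in abundance for the integrable $J_0$. What saves the day there is again a case split: if $v\not\subset\Sigma$, positivity of intersections gives $[v]\cdot[\Sigma]=A\cdot[\Sigma]=1$, forcing $v$ to meet $\Sigma$ at exactly one point, incompatible with $v$ passing through both $p_0$ and $p_\infty$; if $v\subset\Sigma$, one must use the algebraic geometry of lines in $Q^{n-1}$ (no line of $Q^{n-1}$ passes through a sufficiently general pair $p_0,p_\infty$), i.e., genericity of the choice of points rather than genericity of $J$. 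You correctly flag the $n\geq 4$ simplicity step as the likely weak point, but the mechanism you propose to close it (Sard–Smale for $A$-curves) is the wrong tool near $\Sigma$, and the same contained-in-$\Sigma$ issue silently breaks your $n=3$ argument as well.
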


\begin{proof}
Since the $G$-actions on $\widetilde{\M}$ and $\widetilde{\M} \times \CP^1$ are free and proper (see e.g.~\cite[Lemma~3.1]{Kess}), the lemma follows immediately from the quotient manifold theorem once we verify that $\widetilde{\M}$ is a smooth manifold of dimension $4d+4$. 
Note that $\dim G = 4$.

Let us first compute the expected dimension of $\widetilde{\M}$. 
Over the image of $\CP^1=\CP^1 \times \{ \mathrm{pt}\}$ under the inclusion $\CP^1 \times \{ \mathrm{pt}\}  \hookrightarrow \Sigma \times \CP^d \hookrightarrow X$, the tangent bundle $TX$ splits into  
\begin{align}\label{eqn: splitting}
	TX|_{\CP^1} 
	\cong \ &(N_{(\Sigma \times \CP^d) /X} \oplus T(\Sigma \times \CP^d))|_{\CP^1} \cong 
	\O_{\CP^1}(1) \oplus T\Sigma|_{\CP^1} \nonumber  \\
	\cong \ &  \O_{\CP^1}(1) \oplus (\O_{\CP^1}(1) \oplus T\CP^{d-1}|_{\CP^1})  \nonumber\\
	\cong \ & \O_{\CP^1}(1) \oplus (\underbrace{\O_{\CP^1}(1) \oplus \cdots \oplus \O_{\CP^1}(1)}_{d-1}\oplus \O_{\CP^1}(2))\\
	 \cong \ & \O_{\CP^1}(1)^{\oplus d} \oplus \O_{\CP^1}(2).\nonumber
\end{align}
We obtain the same splitting over the sphere $\{ \mathrm{pt}\} \times \CP^1$. 
Thus, 
\begin{align*}
	c_1(TX)([u])=d c_1(\O_{\CP^1}(1))(C)+c_1(\O_{\CP^1}(2))(C)=d+2, 
\end{align*}
and the expected dimension of $\widetilde{\M}$ is given by 
\begin{align*}
	2c_1(TX)([u])+\frac{\dim X}{2} \cdot \chi(\CP^1)-\mathrm{codim}(\{p_0 \} \times \CP^d)=4d+4. 
\end{align*} 
Here, $\chi(\CP^1)$ denotes the Euler characteristic of $\CP^1$ and $\mathrm{codim}(\{p_0 \} \times \CP^d)$ denotes the codimension of the $2d$-dimensional submanifold $\{p_0 \} \times \CP^d$ in $X$. 

Since the algebraic intersection number between $C$ and $\Sigma \times \CP^d$ is $1$, 
positivity of intersections shows any holomorphic sphere belonging to $X$ is simply-covered. 
In light of \cite[Lemma 3.3.1]{MSBook}, the splitting (\ref{eqn: splitting}) implies that every element $u$ of $\widetilde{\M}$ whose image is contained in a neighborhood of $\Sigma \times \CP^d$ is Fredholm regular. 
Moreover, \cite[Theorem 4.4.3]{Wen_Lecture} guarantees the existence of a generic extension $J$ of $J_0$ to make the other $J$-holomorphic spheres all Fredholm regular with respect to $J$. 
Therefore, for such a generic $J$, \cite[Theorem 3.4.1]{MSBook} concludes that $\widetilde{\M}$ is a smooth manifold of dimension $4d+4$. 
\end{proof}

Consider the following commutative diagram of smooth manifolds:
\[
\begin{xy}
  \xymatrix{
    \widetilde{\M} \times \CP^1 \ar[d]_{\mathrm{pr}_1} \ar[r]^{\hspace{18pt}/G} &  \mathcal{U} \ar[d]^{\mathfrak{f}} \\
    \widetilde{\M} \ar[r]_-{/G} & \M
  }
\end{xy}
\]
where the horizontal arrows are the quotient maps by the $G$-actions, $\mathrm{pr}_1$ denotes the first projection and 
\begin{align*}
	\mathfrak{f} \colon \U \longrightarrow \M.
\end{align*}
is the map induced by $\mathrm{pr}_1$. 
In view of the freeness and properness of the $G$-actions, it follows from a standard argument in the theory of smooth manifolds that the map $\mathfrak{f}$ is smooth and, in fact, a $\CP^1$-bundle.

In other words, $\U$ serves as the \textit{universal family} over $\M$. 
We now point out that $\mathfrak{f}$ admits two distinguished sections, which will play an important role in the next section. 
Consider the evaluation map 
\begin{align*}
	\ev \colon \mathcal{U} \longrightarrow X, 
	\qquad \operatorname{ev}([u,z]) = u(z).
\end{align*}
The first section of the $\CP^1$-bundle is 
\[
	\operatorname{ev}^{-1}(\{p_0\} \times \mathbb{CP}^d).
\] 
For the second section, take a generic holomorphic section of 
$\mathcal{O}_{\mathbb{CP}^d}(1) \to \mathbb{CP}^d$ 
whose image lies in $N(\mathbb{CP}^d)$ and is disjoint from $p_0 \in \Sigma$. 
Denote the image of this section by $\Sigma'$. 
Then 
\[
	\operatorname{ev}^{-1}(\Sigma' \times \mathbb{CP}^d)
\]
defines another section. 
These are indeed sections, as follows from positivity of intersections together with the fact that the algebraic intersection numbers of $[\Sigma\times \CP^d]=[\Sigma'\times \CP^d]$ with $C$ are both equal to $1$. 
Moreover, since $p_0$ and $\Sigma'$ are disjoint, the two sections constructed above are disjoint as well.

\subsection{Compactness of the moduli spaces}

Here we show the compactness of the moduli spaces. 

\begin{lemma}\label{compactness}
The moduli spaces $\M$ and $\U$ are compact. 
In particular, the evaluation map $\ev \colon \U \to X$ is proper. 
\end{lemma}

\begin{proof}
It suffices to show the compactness of $\M$. 
The following argument is inspired by \cite[Section 3.4]{KZ}. 
Let $(u_\nu)$ be a sequence of holomorphic spheres $u_\nu \in \M$. 
By Gromov compactness, passing to a subsequence, $(u_{\nu_k})$ converges to a stable holomorphic sphere 
\begin{align*}
	u_\infty=(u_\infty^1, \ldots, u_\infty^m) \in \overline{\M},  
\end{align*}
where $\overline{\M}$ is the Gromov compactification of $\M$. 

We first prove that exactly one irreducible component among $u_\infty^{j}$ ($j=1,\ldots,m$) is non-constant. 
The proof proceeds by contradiction, using symplectic energy.
Assume that $m \geq 2$. 
We may suppose that all $u_\infty^j$ are non-constant. 
By the Mayer--Vietoris exact sequence together with the simply-connectedness of $ST^*S^{2d+1}=\del W$, 
the map 
\begin{align*}
	H_2(W; \Z) \oplus H_2((M_0 \times \CP^d) \setminus N(L); \Z) \to H_2(X; \Z)
\end{align*}
is surjective. 
Since $(W,\omega)$ is symplectically aspherical, the elements of $H_2(W;\Z)$ have zero symplectic energy and hence contribute nothing to the energy. 
Similarly, one finds that the map 
\begin{align*}
	H_2(N(L); \Z) \oplus H_2((M_0 \times \CP^d) \setminus N(L); \Z) \to H_2(M_0 \times \CP^d; \Z)
\end{align*}
is surjective. 
When $d > 1$, this map is in fact an isomorphism since $H_2(ST^{*}S^{2d+1};\Z)=0$. 
Thus, $H_2((M_0 \times \CP^d) \setminus N(L); \Z)$ is generated by $C$ and $D$, where $D$ is represented by a complex line in $\{ \mathrm{pt} \} \times \CP^d$. 
When $d=1$,  in addition to those generators, $H_2((M_0 \times \CP^d) \setminus N(L); \Z)$ has a fiber class of the $S^2$-bundle 
\begin{align*}
	ST^{*}S^3 \cong S^3 \times S^2 \to S^3
\end{align*}
as observed in \cite[Section 3.4.3]{KZ}. 
However, since the symplectic area of this class is $0$, it can be ignored. 
Therefore, we may express each homology class $[u_{\infty}^j]$ as 
\begin{align*}
	[u_{\infty}^j]=a_jC+b_jD+S_j, 
\end{align*} 
where  $a_j, b_j \in \Z$ and $S_j$ is the image of an element of $H_2(W;\Z)$.

By positivity of intersections, 
\begin{align*}
	[u_\infty^j] \cdot [\Sigma \times \CP^d]=a_j \geq 0. 
\end{align*}
Since $C \cdot [\Sigma \times \CP^d]=1$, it follows that exactly one $a_j$ equals $1$ while the others vanish. 
Without loss of generality, assume this occurs for $j=1$: 
\begin{align*}
	[u_\infty^1]=C+b_1D+S_1 \quad \textrm{ and } \quad [u_\infty^j]=b_jD+S_j \textrm{ for }j=2,\ldots, m. 
\end{align*}
As $[u_\infty]=[u_\nu]=C$, we obtain 
\begin{align*}
	\Omega([u_{\infty}])  =  \Omega(C)=1. 
\end{align*}
On the other hand, $[u_\infty]= [u_\infty^1]+\cdots+[u_\infty^m]=C+(\sum_{j=1}^{m}b_j)D+\sum_{j=1}^{m}S_j$, so 
\begin{align*}
	\Omega([u_{\infty}])  =  \Omega\left(C+\left(\sum_{j=1}^{m}b_j \right)D+\sum_{j=1}^{m}S_j \right) =1+ \sum_{j=1}^mb_j. 
\end{align*}
Thus, $\sum_{j=1}^mb_j=0$, that is, 
\begin{align}\label{inq: d}
b_1= -\sum_{j=2}^mb_j.
\end{align} 
Since each $u_j^{\infty}$ is a non-constant holomorphic sphere, its symplectic energy is positive, which implies 
\begin{align*}
	b_1> -1 \quad \textrm{ and } \quad b_j>0 \textrm{ for }j=2,\ldots, m.
\end{align*}
Combining this with the inequality (\ref{inq: d}) yields 
\begin{align*}
	-1 < b_1 <0,
\end{align*}
contradicting the integrality of $b_1$. 
Hence $u_{\infty}$ has a unique non-constant component. 

Finally we check that the limit $u_\infty=(u_\infty^1, \ldots, u_\infty^m)$ has no constant component, and therefore $u_\infty \in \M$. 
Suppose on the contrary that a constant component exists. 
Since $u_\infty$ has only one non-constant component, this forces the existence of a constant component with at least two marked points besides nodal points. 
This is impossible by the definition of the moduli space $\M$, which completes the proof.
\end{proof}

\begin{remark}\label{rmk: BSZ}
In \cite{KZ} and \cite{BSZ}, the ambient space $\CP^1 \times \C^d \times \CP^d$ is obtained by the one-point compactification of the first $\C$-factor of $\C^{d+1} \times \CP^d$, while our $M_0 \times \CP^d$ is obtained by capping off the $\C^{d+1}$-factor of $\C^{d+1} \times \CP^d$ using a compactifying divisor. 
These make the difference between \cite{KZ, BSZ} and ours to show the properness of the evaluation map: 
in \cite{KZ, BSZ}, their proof crucially relies on the existence of two hypersurfaces with desired intersection behavior, one of which is demanding to find; 
on the other hand, our proof uses only one canonical hypersurface $\Sigma \times \CP^d$. 
\end{remark}

\begin{remark}\label{rem: connected}
The moduli space $\M$ a priori is not connected; neither is $\U$ as it is a $\CP^1$-bundle over $\M$. 
Still, all holomorphic spheres contained in $\Sigma \times \CP^d$ belong to the same connected component of $\M$. 
Hence, taking this connected component, we assume $\M$ and $\U$ to be connected throughout this paper. 
\end{remark}

\section{Topology of symplectic fillings}\label{section: topology of fillings}

\subsection{Degree of the evaluation map}

Let $(W,\omega)$ be a symplectically aspherical filling of $(ST^{*}S^{2d+1}, \xi_{\can})$ with $d \geq 1$ and $(X,\Omega)$ the closed symplectic manifold constructed as in Section \ref{section: closed}.  
We begin this section by analyzing the evaluation map $\ev \colon \U \rightarrow X$ defined to be $\ev([u,z])=u(z)$. 

\begin{lemma}\label{lem: unique}
Given a point $q_0 \in (\Sigma \setminus\{p_0\}) \times \CP^d$, there exists a unique holomorphic sphere $u \in \M$ passing through $q_0$. 
\end{lemma}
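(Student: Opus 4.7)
My plan proceeds in three steps.

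First, I would confine the problem to $\Sigma$. Given $[u,z_0,z_\infty]\in\M$ with $q_0\in u(\CP^1)$, the splitting (\ref{eqn: splitting}) identifies the normal bundle $N_{\Sigma/M}$ restricted to a line of class $A$ (or $B$) with $\O_{\CP^1}(1)$, giving $[u]\cdot[\Sigma]=(A+B)\cdot[\Sigma]=2$. Positivity of intersections (\cite[Proposition 7.1]{CM}) therefore leaves two possibilities: either $u(\CP^1)\subset\Sigma$, or $u\cap\Sigma$ consists of exactly two transverse points. In the second case these points must be $p_0$ and $p_\infty$, which already lie in $u\cap\Sigma$. Since $p_0,p_\infty\in\ev(\M^{\red}_z)$ (any reducible element of $\overline{\M}$ provides this by placing the extra marked point on top of $z_0$ or $z_\infty$), the hypothesis $q_0\notin\ev(\M^{\red}_z)$ forces $q_0\neq p_0,p_\infty$, excluding the second case. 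Hence $u(\CP^1)\subset\Sigma=Q^{n-1}\subset\CP^n$.

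Second, I would pin down $u(\CP^1)$ using projective geometry. Since $u$ is simply-covered (Section \ref{section: smooth strata}) and $(A+B)\cdot H=2$ for the hyperplane class $H$ of $\CP^n$, the image $u(\CP^1)$ is an irreducible reduced degree-$2$ curve in $\CP^n$, which is forced to be a smooth plane conic spanning some $\CP^2$ (a reduced irreducible degree-$2$ curve lies in its linear span, which has dimension at most $2$, and in $\CP^2$ irreducible conics are smooth). I claim $p_0,p_\infty,q_0$ span a $\CP^2$ in $\CP^n$: if they lay on a line $\ell$, this $\ell$ would meet $Q^{n-1}$ in three points and thus lie in $Q^{n-1}$, giving a line of class $A$ or $B$; combined with any line of the complementary class through one of its points, $\ell$ would produce a reducible element of $\M^{\red}$ (allowed by Lemma \ref{lem: irreducible components}) passing through $q_0$, contradicting the hypothesis. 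Let $P=\langle p_0,p_\infty,q_0\rangle\cong\CP^2$. Since $u(\CP^1)$ is a plane conic containing these three points, its span equals $P$, so $u(\CP^1)\subset P\cap Q^{n-1}$; both sides are degree-$2$ curves in $P$, so they coincide.

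Third, I would rule out reducibility of $P\cap Q^{n-1}$ and extract a unique class in $\M$. If $P\cap Q^{n-1}$ were a pair of lines $\ell_1\cup\ell_2$ in $Q^{n-1}$, then by Lemma \ref{lem: irreducible components} this pair would be a reducible element of $\M^{\red}$ with marked points placed at $p_0,p_\infty$ on the appropriate components, and $q_0\in\ell_1\cup\ell_2$ would again violate the hypothesis. So $P\cap Q^{n-1}$ is a smooth conic, uniquely determined by $p_0,p_\infty,q_0$. Any biholomorphism $u\colon\CP^1\to P\cap Q^{n-1}$ together with marked points $z_0,z_\infty$ satisfying $u(z_i)=p_i$ is automatically $J$-holomorphic (since $J=J_0$ on $N(Q^{n-1})$ and the conic is an algebraic subvariety), and any two such choices differ by an element of $G=\mathrm{Aut}(\CP^1)$, hence represent the same class in $\M=\widetilde{\M}/G$. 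Existence and uniqueness of $[u,z_0,z_\infty]\in\M$ through $q_0$ follow.

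The principal obstacle, I anticipate, is the case analysis in Steps~2 and~3: one must verify that both collinearity of $p_0,p_\infty,q_0$ and reducibility of $P\cap Q^{n-1}$ really yield bona fide elements of $\M^{\red}$ with properly placed marked points (including the degenerate configurations where both marked points land on a single component), for which the precise structural description in Lemma \ref{lem: irreducible components} is essential.
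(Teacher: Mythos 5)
Your argument follows the same basic strategy as the paper---confine $u$ to $\Sigma$ by positivity, then reduce to the classical fact that a conic is determined by plane sections of the quadric---but you pass directly to the $2$-plane $P=\langle p_0,p_\infty,q_0\rangle$, whereas the paper iterates through hyperplane sections $Q^{n-1}\supset Q^{n-2}\supset\cdots\supset Q^2$ and only then intersects with a plane in $\CP^3$. Your route is arguably cleaner, and your Step~1 spells out the positivity argument that the paper leaves terse. The uniqueness half of your argument is solid: any $u\in\M$ through $q_0$ has image an irreducible degree-$2$ curve in $\Sigma\subset\CP^n$, hence a smooth plane conic, which can only be $P\cap Q^{n-1}$.

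There is, however, a genuine gap in how you dispatch the collinear case in Step~2 (and the same issue infects one subcase of Step~3). You claim that a line $\ell\subset Q^{n-1}$ through $p_0,p_\infty,q_0$, ``combined with any line of the complementary class through one of its points,'' produces a reducible element of $\M^{\red}$ through $q_0$. But elements of $\M^{\red}$ are \emph{stable} maps with the two marked points on distinct components and distinct from the nodal point. If you take $u_0=\ell$ (carrying the marked point over $p_0$), the complementary component $u_\infty$ must pass through $p_\infty$ and also meet $\ell$; since $p_\infty\in\ell$ and two distinct lines meet in at most one point, $u_\infty\cap\ell=\{p_\infty\}$, so the nodal point on $u_\infty$ coincides with its marked point---this is not a stable map, and such a configuration does not belong to $\M^{\red}$. (Placing both marked points on $\ell$ doesn't help either, since $\ell$ alone has the wrong class $A\ne A+B$, and the auxiliary bubble is not forced to exist as a Gromov limit.) What actually rules out the collinear case is the prior choice of $p_0,p_\infty$: as used in the paper's proof of Lemma~2.4 (its fact~(1), justified in $Q^2$ by taking $p_0=(0,0)$, $p_\infty=(\infty,\infty)$), no degree-$1$ holomorphic sphere in $\Sigma$ passes through both $p_0$ and $p_\infty$; in particular no line in $Q^{n-1}$ contains both, so the three points cannot be collinear and $P$ is a genuine $\CP^2$. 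With that substitution, your Step~3 also goes through, because a singular plane section $\ell_1\cup\ell_2$ must then have $p_0$ and $p_\infty$ on distinct lines meeting at a point different from either, which \emph{is} a bona fide element of $\M^{\red}$ containing $q_0$.
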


\begin{proof}
Since $C \cdot [\Sigma \times \CP^d]=1$, positivity of intersections shows that the image of an element $u$ of $\M$ passing through $q_0$ must entirely lie in $\Sigma \times \CP^d$. 
By the choice of the almost complex structure $J$, such a holomorphic sphere is a genuine complex curve on $\Sigma \times \CP^d$. 

We denote a curve in question by $u=(u_1, u_2)$ with complex curves $u_1 \colon \CP^1 \to \Sigma$ and $u_2 \colon \CP^1 \to \CP^d$. 
Since $[u]=C$, we have $[u_2]=0$, which implies that $u_2$ is constant. 
As for $u_1$, this is a complex line in $\Sigma \cong \CP^d$ passing through the two distinct points $p_0$ and $q_{01}$, where $q_{01}$ denotes the $\Sigma$-component of $q_0 \in \Sigma \times \CP^d$.
This concludes the uniqueness of holomorphic spheres in question.   
\end{proof}

\begin{proposition}\label{prop: degree}
The evaluation map $\ev \colon \U \rightarrow X$ has degree $1$. 
It restricts to a proper degree $1$ map $\ev \colon \U \setminus \ev^{-1}( (\Sigma' \times \CP^d))  \rightarrow X \setminus (\Sigma' \times \CP^d)$.
\end{proposition}

\begin{proof}
Take a point $q_0 \in (\Sigma \setminus \{p_0\}) \times \CP^d$ as in the preceding lemma.
By that lemma, the fiber $\ev^{-1}(q_0)$ consists of a single element, namely $\ev^{-1}(q_0)={[u,z]}$.
In view of the splitting of $u^*TX$ in (\ref{eqn: splitting}), it follows that $q_0$ is a regular value of $\ev$.
Hence $\deg(\ev)=1$.
The assertion concerning the restricted evaluation map then follows directly.
\end{proof}

\subsection{Uniqueness of symplectic fillings}

Here we prove Theorem \ref{thm: uniqueness}. 
The proof relies on the following lemma, which is essentially equivalent to a result of Kwon--Zehmisch \cite[Lemma 3.1]{KZ}.

\begin{lemma}[{Kwon--Zehmisch \cite[Lemma 3.1]{KZ}}]\label{lem: KZ}
If the embedding $\iota \colon \{ p_0 \} \times \CP^d \to X \setminus (\Sigma'\times \CP^d)$ is $\pi_1$-surjective and $H_k(\, \cdot\, ; \mathbb{F})$-onto for all $k \in \Z$ and all fields $\mathbb{F}$, then $W$ is diffeomorphic to $DT^*S^{2d+1}$. 
\end{lemma}

\begin{proof}[Proof of Theorem \ref{thm: uniqueness}]
Recall that $\ev^{-1}(\{p_0\}\times \CP^d)$ and $\ev^{-1}(\Sigma'\times \CP^d)$ give disjoint sections of 
$\mathfrak{f}\colon \mathcal{U}\to \M$. 
Moreover, since $\mathfrak{f}$ is a $\CP^1$-bundle, the space 
$\mathcal{U}\setminus \ev^{-1}(\Sigma'\times \CP^d)$ deformation retracts onto 
$\ev^{-1}(\{p_0\}\times \CP^d)$. 
Thus we obtain a commutative diagram
\[
\begin{xy}
  \xymatrix{
    \ev^{-1}(\{p_0\}\times \CP^d) \ar[d]_{\simeq} \ar[r]^{\hspace{14pt}\ev} &
    \{p_0\}\times \CP^d \ar[d]^{\iota} \\
    \mathcal{U}\setminus \ev^{-1}(\Sigma'\times \CP^d) \ar[r]_-{\ev} & 
    X \setminus (\Sigma'\times \CP^d)
  }
\end{xy}
\]
where the vertical arrows are inclusions. 
As $\ev \colon \mathcal{U}\setminus \ev^{-1}(\Sigma'\times \CP^d)\to X \setminus (\Sigma'\times \CP^d)$ has degree $1$ by Proposition \ref{prop: degree}, 
it induces a surjection on $\pi_1$ and is onto on $H_k(\,\cdot\,;\mathbb{F})$ for all $k\in \mathbb{Z}$ and all fields $\mathbb{F}$. 
In view of the homotopy equivalence 
$\ev^{-1}(\{p_0\}\times \CP^d) \simeq  
\mathcal{U}\setminus \ev^{-1}(\Sigma'\times \CP^d)$, 
the same holds for $\iota$. 
Therefore, Lemma \ref{lem: KZ} implies that $W$ is diffeomorphic to $DT^*S^{2d+1}$. 
\end{proof}

\begin{remark}
The above proof can be compared to the proof of the uniqueness of symplectically aspherical fillings of $(S^{2n-1}, \xi_{\st})$ given by Ghiggini--Niederkr\"uger \cite[Corollary~5.3]{GN}. 
In their proof, they consider the closed symplectic manifold $(X',\Omega')$ obtained from a given symplectically aspherical filling $(W',\omega')$ of $(S^{2n-1}, \xi_{\st})$ by capping with the disk bundle over $\CP^{n-1}$ associated with $\O_{\CP^{n-1}}(1)$. 
Then they determine the fundamental group and homology of $W'$ using the moduli space of holomorphic spheres $\M'$ in $X'$ passing through a fixed point of $\CP^{n-1} \subset X'$ and the universal family $\U'$ over $\M'$. 
Finally, they apply the h-cobordism theorem to determine the diffeomorphism type of $W'$.
We observe that the moduli spaces $\M'$ and $\U'$ are similar to our $\M$ and $\U$, respectively.
\end{remark}


\section{Applications}\label{section: applications}

\subsection{Symplectic cobordisms}\label{section: cobordism}

Let  $(Y_+, \xi_+=\Ker(\alpha_+))$ and $(Y_-, \xi_-=\Ker(\alpha_-))$ be (cooriented) contact manifolds. 
A compact exact symplectic manifold $(X, d\lambda)$ with boundary $\del X=-Y_- \sqcup Y_+$ is called an \textit{exact symplectic cobordism} from $(Y_-, \xi_-)$ to $(Y_+, \xi_+)$ if the Liouville vector field on $X$, defined to be $d\lambda$-dual to $\lambda$, points  inward along $Y_-$ and outward along $Y_+$, and $\xi_{\pm}=\Ker(\lambda|_{TY_{\pm}})$. 
Here $Y_{\pm}$ and $X$ are oriented by the volume forms $\alpha_{\pm} \wedge (d\alpha_{\pm})^{n-1}$ and $(d\lambda)^n$, respectively, and $-Y_-$ means $Y_-$ with the reversed orientation. 
If $(Y_-, \xi_-)$ is the empty set, then the exact symplectic cobordism $(X, d\lambda)$ is called an \textit{exact symplectic filling} of $(Y_+, \xi_+)$. 

Recall that for each integer $\ell \geq1$ the $5$-manifold $Y_\ell$ is defined to be 
$$
	Y_\ell = \{ (z_1,z_2,z_3, z_4) \in \C^4 \mid z_1^{2\ell}+z_2^2+z_3^2+z_4^2=0\} \cap S^7. 
$$
This is the so-called Brieskorn manifold, known to be diffeomorphic to $S^2 \times S^3$ (see e.g. \cite[Proposition 6.1]{DK}). 
It also carries the canonical contact structure $\xi_\ell$, induced by the standard contact structure on the unit $7$-sphere $S^7=\{z \in \C^4 \mid \|z\|=1\}$. 
The Brieskorn contact manifold $(Y_1, \xi_1)$ is contactomorphic to the unit cotangent bundle $(S^*TS^3, \xi_{\can})$ (see for example {\cite[Lemma 3.1]{KvK}} and \cite[Lemma 5.3]{Mayd}). 
A Milnor fiber $V_\ell$ of the singularity $z_1^{2\ell}+z_2^2+z_3^2+z_4^2$ gives an exact symplectic filling of $(Y_\ell, \xi_\ell)$. 
According to \cite[Section 2.7]{AGLV}, for $\ell$ and $\ell'$ with $\ell<\ell'$, the singularity $z_1^{2\ell}+z_2^2+z_3^2+z_4^2$ is adjacent to $z_1^{2\ell'}+z_2^2+z_3^2+z_4^2$; hence \cite[Lemma 9.9]{Keating} leads to an exact symplectic embedding of a Milnor fiber $V_{\ell}$ into the completion of $V_{\ell'}$. 
The latter is diffeomorphic to $V_{\ell'} \cup ([0,\infty) \times Y_{\ell'})$, and the image of the embedding is contained in $V_{\ell'} \cup ([0,c) \times Y_{\ell'})$ for some $c>0$ by compactness. 
Thus, truncating the completion at the hypersurface $\{c\} \times Y_{\ell'}$ and removing the interior of the embedded $V_{\ell}$ shows the existence of an exact symplectic cobordism from $(Y_\ell, \xi_\ell)$ to $(Y_{\ell'}, \xi_{\ell'})$. 
The following theorem tells us that this does not hold for the reverse direction in general. 

\begin{theorem}\label{thm: cobordism}
For any $\ell >1$, there exists no exact symplectic cobordism from $(Y_\ell, \xi_\ell)$ to $(Y_1, \xi_1)$. 
\end{theorem}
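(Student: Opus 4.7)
The plan is to manufacture, from a hypothetical cobordism, a symplectically aspherical filling of $(ST^*S^3, \xi_{\can})$ with third rational Betti number larger than $1$, in direct conflict with Corollary \ref{cor: H_3}.

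Suppose, for contradiction, that an exact symplectic cobordism $(X, d\lambda)$ from $(Y_\ell,\xi_\ell)$ to $(Y_1,\xi_1)$ exists for some $\ell\geq 2$. The Milnor fibration of $z_1^{2\ell}+z_2^2+z_3^2+z_4^2$ furnishes a Stein, and in particular exact symplectic, filling $V_\ell$ of $(Y_\ell,\xi_\ell)$. Gluing $V_\ell$ to $X$ along $(Y_\ell,\xi_\ell)$ via the standard matching of Liouville collars would produce an exact, hence symplectically aspherical, filling
\[
W := V_\ell \cup_{Y_\ell} X
\]
of $(Y_1, \xi_1)\cong (ST^*S^3, \xi_{\can})$. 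Corollary \ref{cor: H_3} then forces $H_3(W;\Q)\cong\Q$.

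To reach a contradiction we would compute $H_3(W;\Q)$ directly. Classical Brieskorn theory gives $V_\ell\simeq \bigvee_{2\ell-1}S^3$ and $Y_\ell\cong S^2\times S^3$, so $H_3(V_\ell;\Q)\cong\Q^{2\ell-1}$, $H_2(V_\ell;\Q)=0$, and $H_3(Y_\ell;\Q)\cong H_2(Y_\ell;\Q)\cong\Q$. Feeding these into the rational Mayer--Vietoris sequence
\[
H_3(Y_\ell;\Q)\longrightarrow H_3(V_\ell;\Q)\oplus H_3(X;\Q)\longrightarrow H_3(W;\Q)\longrightarrow H_2(Y_\ell;\Q)
\]
yields $\dim_\Q H_3(W;\Q)\geq (2\ell-1)+\dim_\Q H_3(X;\Q)-1\geq 2\ell-2\geq 2$, contradicting the previous paragraph.

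The only technical point deserving attention is the exact gluing: the two Liouville forms on $Y_\ell$ arising from $V_\ell$ and from $X$ both induce the contact structure $\xi_\ell$, but a priori differ by a positive conformal factor, which one absorbs by reparameterizing the Liouville flow on a collar of $\del V_\ell$. This is routine. The real content of the argument is that Corollary \ref{cor: H_3} --- itself a byproduct of the holomorphic-curve analysis of Section \ref{section: holomorphic curves} --- converts a straightforward homological count into a genuine symplectic obstruction.
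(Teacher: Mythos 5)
Your proof is correct and follows essentially the same route as the paper's: glue the Milnor fiber $V_\ell$ to the hypothetical cobordism $X$ to obtain an exact (hence symplectically aspherical) filling $W$ of $(Y_1,\xi_1)\cong (ST^*S^3,\xi_{\can})$, invoke Corollary \ref{cor: H_3} to pin down $H_3(W;\Q)\cong\Q$, and contradict this via the rational Mayer--Vietoris sequence using the known homology of $V_\ell\simeq\bigvee_{2\ell-1}S^3$ and $Y_\ell\cong S^2\times S^3$. Your dimension-count at the Mayer--Vietoris exactness point is a small streamlining of the paper's argument: you only need that the image of $H_3(Y_\ell;\Q)$ has dimension at most $1$, whereas the paper additionally proves injectivity of $H_3(Y_\ell;\Q)\to H_3(V_\ell;\Q)$ and then does a two-case analysis on the connecting map $\varphi$; both versions deliver the same contradiction for $\ell\geq 2$.
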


\begin{proof}
Suppose for the sake of contradiction that there exists an exact symplectic cobordism $(W_{\ell,1},d\lambda)$ from $(Y_\ell, \xi_\ell)$ to $(Y_1, \xi_1)$. 
Then, gluing $(W_{\ell,1}, d\lambda)$ with $V_\ell$ symplectically gives an exact symplectic filling $(W,\omega)$ of $(Y_1, \xi_1)$. 
Consider the Mayer--Vietoris exact sequence for the pair $(V_\ell, W_{\ell,1})$: 
\begin{align}\label{seq: cobordism}
	H_3(Y_\ell;\Q) \rightarrow H_3(V_\ell;\Q) \oplus H_3(W_{\ell,1}; \Q) \rightarrow H_3(W;\Q) \xrightarrow{\varphi} H_2(Y_\ell; \Q) 
\end{align}
We have $H_3(Y_\ell; \Q) \cong H_2(Y_\ell; \Q) \cong \Q$ since $Y_\ell$ is diffeomorphic to $S^2 \times S^3$, and by Theorem~\ref{thm: uniqueness}, $H_3(W;\Q) \cong \Q$. 
As the Milnor fiber $V_\ell$ is homotopy equivalent to the wedge sum of $2\ell-1$ $3$-spheres by \cite[Theorems~6.5 and~9.1]{Milnor}, we obtain $H_3(V_\ell; \Q) \cong \Q^{2\ell-1}$. 
Moreover, this homotopy type implies that $H_4(V_\ell, Y_\ell) \cong H^2(V_\ell)=0$, which concludes that the induced map $H_3(Y_\ell; \Q) \rightarrow H_3(V_\ell;\Q)$ by the inclusion is injective; so is the first map in (\ref{seq: cobordism}). 
To sum up, the sequence (\ref{seq: cobordism}) becomes 
\begin{align}\label{seq: cobordism2}
	\Q \hookrightarrow \Q^{2\ell-1} \oplus H_3(W_{\ell,1}; \Q) \rightarrow \Q \xrightarrow{\varphi} \Q
\end{align}

Now there are two possibilities of $\varphi$: either it is the zero-map or an isomorphism. 
In the former case, the sequence (\ref{seq: cobordism2}) becomes a split short exact sequence. 
Hence, we have 
$$
	\Q^{2\ell-1} \oplus H_3(W_{\ell,1};\Q) \cong \Q^2, 
$$
which contradicts our assumption about $\ell$. 
Similarly, in the latter case, we have 
$$
	\Q^{2\ell-1} \oplus H_3(W_{\ell,1};\Q) \cong \Q,
$$
again a contradiction. 
\end{proof}

\begin{remark}
As mentioned above, $Y_{\ell}$ is diffeomorphic to $S^2 \times S^3$ for every $\ell$. In particular, there are no topological obstructions to the existence of \emph{smooth} cobordisms between $Y_{\ell}$'s. In this sense, Theorem 4.1 highlights intriguing differences between symplectic and smooth cobordism categories. 
\end{remark}

\begin{remark}
The same result as Theorem~\ref{thm: cobordism} also holds for the $(4d+1)$-dimensional Brieskorn manifolds
\[
  Y_{\ell}^{4d+1}
  = \{(z_1, z_2, \ldots,z_{2d+2}) \in \C^{2d+2} \mid z_1^{2\ell}+z_2^2+\cdots+z_{2d+2}^2=0\} \cap S^{4d+3}
\]
for any $d \geq 2$. 
This follows from essentially the same argument as in the above proof, 
making use of the middle-dimensional homology of a hypothetical exact symplectic cobordism.
Note that $Y_1^{4d+1}$, with its canonical contact structure, is contactomorphic to $(ST^*S^{2d+1}, \xi_{\can})$, just as in the case $d=1$. 
Moreover, as shown in \cite[Proposition~1]{Ueb}, the diffeomorphism types of $Y_{\ell}^{4d+1}$ depend on $\ell$ modulo $4$; in particular, $Y_{\ell}^{4d+1}$ with $\ell \equiv 1 \pmod{4}$ is diffeomorphic to $ST^*S^{2d+1}$. 
Thus, in this case there are no topological obstructions to the existence of an exact symplectic cobordism.
\end{remark}

\begin{remark}
Here are some remarks from the perspective of Floer theory. 
\begin{enumerate}

\item The $5$-dimensional Brieskorn manifolds $Y_{\ell}$ have provided particularly interesting examples of exotic contact structures. In \cite[Corollary 4]{Ueb}, it is proved that $Y_{\ell}$'s are pairwise non-contactomorphic using positive (non-equivariant) symplectic homology, while they have the same positive equivariant symplectic homology \cite[Appendix A]{KvK}; see also \cite[Question 6.10]{KvK}.

\item As an alternative approach to Theorem 4.1, one might try to show that there are no exact symplectic embeddings from the Milnor fiber $V_{\ell}$ to the Milnor fiber $V_{\ell'}$ if $\ell'$ is smaller than $\ell$; this implies that there are no cobordisms from $Y_{\ell}$ to $Y_{\ell'}$. Unfortunately, most of embedding obstructions in the literature are only available for domains in $\R^{2n}$ or more generally for domains with vanishing symplectic homology, whereas Milnor fibers we are interested in have nonvanshing symplectic homology; see \cite[Theorem 6.3]{KvK}.  An obstruction applicable to domains with nonvanishing symplectic homology comes from the notion of dilation recently introduced in \cite{Zhou}. Direct computations using \cite[Proposition 5.14]{Zhou} and \cite[Section 5.3.1]{KvK} however show that the Milnor fiber $V_{\ell}$ admits the same order of dilations for each $\ell$, so this obstruction does not work in our situation. As for another potential obstruction, we refer the reader to \cite{GaSie}. 
\end{enumerate}
\end{remark}

\begin{remark}\label{rmk: sphere case}
The Brieskorn manifold $Y'_{\ell}$ given by 
$$
	Y'_\ell = \{ (z_1,z_2,z_3, z_4) \in \C^4 \mid z_1^{2\ell-1}+z_2^2+z_3^2+z_4^2=0\} \cap S^7,
$$
with $\ell \geq 1$, is known to be diffeomorphic to $S^5$ (see e.g. \cite{Brie}) and has the canonical contact structure $\xi'_{\ell}$. 
One can verify that $(Y'_1, \xi'_1)$ is contactomorphic to the standard sphere $(S^5, \xi_\mathrm{st})$. 
Indeed, the submanifolds of $S^7$ defined by 
$$
	Y'_1(t)= \{ (z_1,z_2,z_3, z_4) \in \C^4 \mid z_1+t(z_2^2+z_3^2+z_4^2)=0\} \cap S^7,
$$
with $t \in [0,1]$, are all diffeomorphic to $S^5$, and $Y'_1(0)$ and $Y'_1(1)$ equipped with the contact structures induced by $(S^7, \xi_\mathrm{st})$ agrees with $(S^5, \xi_\mathrm{st})$ and $(Y'_1, \xi'_1)$, respectively; 
hence, identifying each $Y'_1(t)$ with $S^5$, the Gray stability theorem shows that they are contactomorphic.
It is shown in \cite{Usti} that for $2\ell-1 = \pm 1$ (mod $8$), $Y'_{\ell}$'s are pairwise non-contactomorphic to each other.

We can show that there exists an exact symplectic cobordism from $(Y'_1, \xi'_1)$ to $(Y'_\ell, \xi'_\ell)$ for each $\ell>1$ but no such a cobordism in the reverse direction, using the same idea as the above proof coupled with the uniqueness result on the diffeomorphism types of symplectically aspherical fillings of $(S^5, \xi_\mathrm{st})$ by Eliashberg--Floer--McDuff \cite[Theorem 1.5]{McDContact}. 
Moreover, this also proves the non-existence of exact symplectic cobordisms from $(Y'_\ell, \xi'_\ell)$ to $(Y_1, \xi_1)$ or from $(Y_\ell, \xi_\ell)$ to $(Y'_1, \xi'_1)$ for $\ell>2$ although there are smooth cobordisms between them.
\end{remark}


\subsection{Topology of closed symplectic $6$-manifolds}\label{section: 6-manifolds}

Here we will provide another application of Theorem~\ref{thm: uniqueness} in dimension $6$. 

Given a closed symplectic $2n$-manifold $(M,\Omega)$, with $[\Omega] \in H^2(M;\Z)$, having a symplectic hyperplane section $\Sigma$, there is a Liouville vector field $V$, defined near the boundary of a symplectic tubular neighborhood $N(\Sigma)$ of $\Sigma$ in $M$, which points inward along the boundary. 
We equip this boundary $\del N(\Sigma)$ with the contact structure defined by $\alpha=(i_V \Omega)|_{T\del N(\Sigma)}$. 
Then, this contact manifold is contactomorphic to the Boothby--Wang bundle associated with $(\Sigma, \Omega_\Sigma)$, where $\Omega_\Sigma$ denotes the restriction of $\Omega$ to $T\Sigma$. 
By \cite[Lemma 2.2]{DL}, the complement $M \setminus \Int(N(\Sigma))$ with $\Omega|_{M \setminus \Int(N(\Sigma))}$ serves as an exact symplectic filling of the contact manifold $(\del N(\Sigma), \Ker(\alpha))$. 


\begin{theorem}\label{thm: classification}
Let $(M, \Omega)$ be a closed symplectic $6$-manifold with integral symplectic form. 
If $(M, \Omega)$ admits a symplectic hyperplane section symplectomorphic to $(\CP^1 \times \CP^1, \omega_{\FS} \oplus \omega_{\FS})$, 
then $M$ is diffeomorphic to the complex $3$-dimensional quadric 
\[
	Q^3=\{(z_0: \ldots: z_4) \in \CP^4 \mid z_0^2+\cdots+z_4^2=0\}.
\] 
\end{theorem}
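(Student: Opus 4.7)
The plan is to reduce the statement to Theorem \ref{thm: uniqueness} applied to the complement of a symplectic tubular neighborhood of $\Sigma$, and then to identify the resulting closed manifold with $Q^3$ via a classification theorem for simply-connected closed smooth $6$-manifolds.

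First, I would set $W := M \setminus \Int(N(\Sigma))$. By the construction recalled right before the theorem, $W$ inherits a Liouville structure making it an exact (hence symplectically aspherical) filling of the Boothby--Wang bundle associated with $(\Sigma, \Omega|_{T\Sigma}) \cong (\CP^1 \times \CP^1, \omega_{\FS} \oplus \omega_{\FS}) \cong (Q^2, \omega_0)$. The identification used in Section \ref{section: smooth strata} shows that this contact boundary is contactomorphic to $(ST^*S^3, \xi_{\can})$; hence $W$ is an exact filling of $(ST^*S^3, \xi_{\can})$.

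Next, I would verify the hypothesis $H_2(W, \del W; \Z) = 0$ required by Theorem \ref{thm: uniqueness}. By the hyperplane-section assumption, $M \setminus \Sigma$ has the same integral homology as some $6$-dimensional Stein domain, which has the homotopy type of a CW-complex of dimension at most $3$. Hence $H_k(W; \Z) \cong H_k(M \setminus \Sigma; \Z)$ vanishes for $k \geq 4$, and $H_3(W; \Z)$ is free abelian (being the top homology of a $3$-complex). Poincar\'e--Lefschetz duality combined with the universal coefficient theorem then yields
$$
H_2(W, \del W; \Z) \cong H^4(W; \Z) \cong \Hom(H_4(W), \Z) \oplus \mathrm{Ext}(H_3(W), \Z) = 0.
$$
Theorem \ref{thm: uniqueness} therefore produces a diffeomorphism $W \cong DT^*S^3$. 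Since $\PD[\Sigma] = [\Omega]$, the normal line bundle $N_{\Sigma/M}$ has Euler class $[\Omega]|_\Sigma$, so the symplectic neighborhood theorem for codimension-$2$ symplectic submanifolds identifies $(N(\Sigma), \Omega)$ with the standard disk bundle $(N(Q^2), \Omega_0) \subset \O_{Q^2}(1)$. Consequently $M$ is diffeomorphic to $DT^*S^3 \cup N(Q^2)$, and the standard quadric $Q^3$ admits exactly the same decomposition, since $Q^3 \setminus Q^2$ is symplectomorphic to the affine quadric whose symplectic completion is $T^*S^3$.

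The main obstacle, which I would handle last, is to show that these two gluings actually produce diffeomorphic closed $6$-manifolds. I would address this via the Wall--Jupp--Zhubr classification of closed simply-connected smooth $6$-manifolds with torsion-free integral homology: by Propositions \ref{prop: pi_1} and \ref{prop: homology}, $M$ is simply-connected with integral cohomology isomorphic to that of $Q^3$, so its diffeomorphism type is pinned down by the integral cohomology ring together with the characteristic classes $w_2$ and $p_1$. Setting $h := \PD[\Sigma] \in H^2(M; \Z)$, the identity $[\Omega]|_\Sigma = \PD(A + B)$ coupled with Lemma \ref{lem: kernel} (which gives $A = B$ in $H_2(M;\Z)$) yields $h^2 = \PD(2\, i_*[\CP^1 \times \mathrm{pt}])$, and the adjunction formula gives $h^3 = 2$ times the generator of $H^6(M;\Z)$---matching the cohomology ring of $Q^3$. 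The characteristic classes $w_2$ and $p_1$ can be extracted from the tangent bundles of the pieces $DT^*S^3$ and $N(Q^2)$, and are seen to agree with those of $Q^3$. Applying the Wall--Jupp--Zhubr classification then delivers the desired diffeomorphism $M \cong Q^3$.
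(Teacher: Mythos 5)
Your proof is correct and takes essentially the same route as the paper's: both rest on Propositions \ref{prop: pi_1} and \ref{prop: homology} (simply-connectedness and the homology of $Q^3$), then apply the Wall--Jupp classification using the cup product computed from $\PD[\Sigma]=[\Omega]$ together with Lemma \ref{lem: kernel}, and observe that $w_2$ and $p_1$ are determined by data in a neighborhood of $\Sigma$. The detour through Theorem \ref{thm: uniqueness} and the decomposition $M\cong DT^*S^3\cup N(Q^2)$ is extra---as you yourself note, it alone cannot pin down the diffeomorphism type because of the gluing-map ambiguity---and the paper bypasses Theorem \ref{thm: uniqueness} entirely, going directly to Wall--Jupp.
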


The proof of the theorem relies on a classification theorem of certain $6$-manifolds.

\begin{theorem}[{Wall \cite{Wall} and Jupp \cite{Jupp}}]\label{thm: Wall and Jupp}
The following topological invariants determine the diffeomorphism types of smooth, closed, oriented, simply connected $6$-manifolds $M$ with torsion-free homology: 
\begin{itemize}
\item the two cohomology groups $H^2(M;\Z)$ and $H^3(M;\Z)$; 
\item the triple cup product $\mu \colon H^2(M; \Z) \otimes  H^2(M; \Z) \otimes  H^2(M; \Z) \rightarrow \Z$, $\mu(a,b,c)=(a\smile b \smile c)[M]$; 
\item the second Stiefel--Whitney class $w_2(M) \in H^2(M;\Z_2) \cong H^2(M;\Z) \otimes \Z_2$;  
\item the first Pontryagin class $p_1(M) \in H^4(M;\Z) \cong \Hom (H^2(M; \Z) , \Z)$. 
\end{itemize}
\end{theorem}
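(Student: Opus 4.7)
The plan is to follow the handle-theoretic approach of Wall and Jupp. Given two simply-connected closed oriented $6$-manifolds $M_1, M_2$ with torsion-free homology and matching invariants $(H^2, H^3, \mu, w_2, p_1)$, I would produce compatible handle decompositions and upgrade them to a diffeomorphism $M_1 \to M_2$.

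First, trim the handle decompositions. By Smale's cancellation arguments (using $\pi_1 = 1$), each $M_i$ admits a handle decomposition with a single $0$- and $6$-handle and no $1$- or $5$-handles. Torsion-freeness of $H_\ast$ lets one further cancel all algebraically paired handles, leaving exactly $\mathrm{rank}\, H_k(M_i)$ handles of index $k$ for $k = 2,3,4$. Let $V_i \subset M_i$ denote the union of the handles of index $\leq 3$; dually, $M_i \setminus \Int(V_i)$ is a handlebody built from a single $0$-handle and from $2$-handles.

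Second, produce a diffeomorphism $\Phi\colon V_1 \to V_2$. The $\leq 2$-handle stage of $V_i$ is a $6$-dimensional thickening of a simply-connected $2$-complex realizing $H_2(M_i)$; such thickenings are classified, relative to the $2$-complex, by the stable tangent bundle over it, which is in turn determined by $w_2$ and $p_1$ since $w_1 = 0$ and $\pi_1 = 0$. The subsequent $3$-handles are attached along embedded $2$-spheres in a simply-connected $5$-manifold, and by Haefliger--Zeeman unknotting their isotopy classes are determined by homotopy class; the matched $H^3$, together with the stable tangent data, pins these down.

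Third, extend $\Phi$ across the dual handlebody. The attaching circles of the dual $2$-handles lie in the simply-connected $5$-manifold $\partial V_i$, which is determined up to diffeomorphism by its $H_2$ via the Smale--Barden classification. By Alexander duality, the linking of these circles with the belt $2$-spheres of the $3$-handles of $V_i$ realises exactly the triple cup product $\mu$; matching $\mu$ therefore extends $\Phi|_{\partial V_1}$ over the dual $2$-handles, while the closing $6$-handle is unique by the $5$-dimensional smooth Poincar\'e theorem.

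The hard part is ensuring that the boundary identification $\Phi|_{\partial V_1}$ can be chosen in the correct isotopy class for the dual-handle attachments to agree after $\mu$ is matched, i.e.\ that no secondary invariant beyond $(\mu, w_2, p_1)$ intervenes. This is the substantive content of Wall's spin-case argument and Jupp's extension to the non-spin case: one combines the relative $h$-cobordism theorem in dimension $6$ with a linking-form analysis of the framings of the dual $2$-handles in $\partial V_i$, and in the non-spin setting with the additional $w_2$-twisted Pontryagin refinement that Jupp isolates.
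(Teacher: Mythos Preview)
The paper does not prove this theorem: it is quoted verbatim from Wall and Jupp as an external input and then applied as a black box in the proof of Theorem~\ref{thm: classification}. There is therefore nothing in the paper for your proposal to be compared against.

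Your sketch is broadly in the spirit of the original sources, but a couple of points would need tightening if you actually wanted to reconstruct the argument. First, Wall's route is not quite the one you describe: he begins by splitting off connected summands of $S^3\times S^3$ until $b_3=0$, and only then analyses the residual manifold in terms of the cubic form $\mu$ on $H^2$ together with $w_2$ and $p_1$; the $3$-handles are disposed of at the outset rather than carried through to the dual-handle matching step. Second, your identification of $\mu$ with ``linking of attaching circles of dual $2$-handles against belt $2$-spheres of $3$-handles'' is not right as stated---that pairing lives in $H_1\otimes H_2$, not in $\mathrm{Sym}^3 H^2$. The triple cup product is encoded instead by the triple intersections of the $4$-dimensional cocores of the $2$-handles (equivalently, by the framed-link data of the $2$-handle attachments in $S^5$), which is where Jupp's refinement in the non-spin case actually bites.
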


\begin{proof}[Proof of Theorem \ref{thm: classification}]
In what follows, we show the invariants given in Theorem \ref{thm: Wall and Jupp} are completely determined by the information of the normal bundle of $\Sigma$ in $M$, which is isomorphic to the one of $Q^2 \coloneqq \CP^1 \times \CP^1$ in $Q^3$. 
Notice that the symplectic manifold $(M,\Omega)$ can be the quadric $Q^3$ with the symplectic form induced by the Fubini--Study form on $\CP^4$ since $(\CP^1 \times \CP^1, \omega_{\FS} \oplus \omega_{\FS})$ appears as a hyperplane section in the sense of complex geometry.

Recall that the Boothby--Wang bundle associated with 
$(\CP^1 \times \CP^1,\, \omega_{\FS} \oplus \omega_{\FS})$ is contactomorphic to 
$(ST^*S^3,\xi_{\can})$. 
Hence, removing a symplectic tubular neighborhood $N(\Sigma)$ of $\Sigma$, 
we obtain an exact symplectic filling $W$ of $(ST^*S^3,\xi_{\can})$, 
which is diffeomorphic to $DT^*S^3$ by Theorem~\ref{thm: uniqueness}. 
Since $M = W \cup N(Q^2)$ and the spaces $W$, $N(Q^2)$, and 
$W \cap N(Q^2) = ST^*S^3$ are simply connected, it follows that $M$ is simply connected as well. 
To determine the homology and cohomology of $M$, 
note that $H_k(M;\Z)$ and $H^k(M;\Z)$ are straightforward to compute for all $k \neq 3$ 
using the exact sequences of the pairs $(M,W)$ and $(M,N(Q^2))$. 
For the case $k=3$, consider the homology exact sequence of $(M,N(Q^2))$: 
\[
H_3(N(Q^2);\Z) \to H_3(M;\Z) \to H_3(M,N(Q^2);\Z) \to H_2(N(Q^2);\Z) \to H_2(M;\Z),
\]
which is equivalently written as 
\[
0 \;\to\; H_3(M;\Z) \;\to\; \Z \;\to\; \Z \oplus \Z \;\to\; \Z.
\]
The group $H_2(N(Q^2);\Z)$ is generated by the two ruling curves of 
$Q^2 \cong \CP^1 \times \CP^1$, while $H_2(M;\Z)$ is generated by one such curve. 
Thus the map $H_2(N(Q^2);\Z) \to H_2(M;\Z)$ has kernel isomorphic to $\Z$, which is free, 
and hence $H_3(M,N(Q^2);\Z) \to H_2(N(Q^2);\Z)$ is injective. 
Therefore $H_3(M;\Z) \cong H^3(M;\Z) = 0$. 
We conclude that 
\[
H_k(M;\Z) \cong H_k(Q^3;\Z) 
\quad\text{and}\quad  
H^k(M;\Z) \cong H^k(Q^3;\Z) 
\quad \text{for all } k.
\]

In order to see the rest of the invariants in Theorem \ref{thm: Wall and Jupp}, we observe generators of $H_2(M;\Z)$, $H_4(M;\Z)$ and $H^2(M;\Z)$. 
In view of the above argument, the image of $[\CP^1] \in H_2(\CP^1; \Z)$ under the map induced by the inclusion $j \colon \CP^1 \hookrightarrow \CP^1 \times \{ \mathrm{pt}\} \xhookrightarrow{i} M$ gives rise to a generator of $H_2(M;\Z) \cong \Z$. 
As for the cohomology group $H^2(M;\Z)$, since 
$$
	[\Omega](j_*[\CP^1])=[\omega_{\FS}]([\CP^1])=1,
$$ 
the cohomology class $[\Omega]$ is a primitive element and hence a generator of the group. 
Similarly, the intersection 
$$
	[\Sigma] \cdot j_*[\CP^1]=1
$$
shows that $[\Sigma]=i_*[\CP^1 \times \CP^1]$ generates $H_4(M;\Z)$. 
Notice that these generators are determined by the information of the neighborhood $N(\Sigma)$ of $\Sigma$. 

Now the triple tensor product $[\Omega]^{\otimes 3}$ generates $H^2(M;\Z)^{\otimes 3}$, and the fact that $\PD([\Omega])=[\Sigma]$ gives 
\begin{align*}
	\mu([\Sigma]^{\otimes 3}) & =([\Omega]\smile [\Omega] \smile [\Omega])([M]) = ([\Omega] \smile [\Omega])([\Sigma]) \\
	 & = ([\omega_{\FS} \oplus \omega_{\FS}] \smile [\omega_{\FS} \oplus \omega_{\FS}])([\CP^1 \times \CP^1]). 
\end{align*}
As complex vector bundles, $i^*TM$ and $j^*TM$ split into 
$$
	i^*TM \cong T(\CP^1 \times \CP^1) \oplus \O_{\CP^1 \times \CP^1}(1)
\quad
{\mathrm{and}} 
\quad 
	j^*TM \cong \O_{\CP^1}\oplus \O_{\CP^1}(1) \oplus \O_{\CP^1}(2),
$$
respectively, which is evidently independent of $M \setminus N(\Sigma)$. 
According to the naturality of characteristic classes, $w_2(M)$ and $p_1(M)$ are completely determined by those pull-back bundles, which completes the proof. 
\end{proof}

\begin{remark}
To the best of the authors' knowledge, it is a difficult problem to determine whether a closed symplectic $6$-manifold $(M, \Omega)$ satisfying the assumption of Theorem~\ref{thm: classification} is symplectomorphic to the $3$-dimensional quadric $Q^3$ equipped with the standard K\"ahler form.  
Nevertheless, a result of Li and Ruan \cite{Li-Ruan} reveals a symplectic feature of $(M,\Omega)$: 
by~\cite[Corollary 4.6]{Li-Ruan}, $(M,\Omega)$ is \emph{symplectically uniruled}, as defined in terms of Gromov--Witten invariants.
\end{remark}

\begin{remark}\label{rem: CP^2}
A similar result to Theorem \ref{thm: classification} holds for $(\CP^2, \omega_{\FS})$: if a closed symplectic symplectic $6$-manifold $(M, \Omega)$ with $[\Omega] \in H^2(M;\Z)$ contains $(\CP^2, \omega_{\FS})$ as a symplectic hyperplane section, then $M$ is diffeomorphic to $\CP^3$. 
To see this, by using the result of Eliashberg--Floer--McDuff \cite{McDContact} mentioned in Remark \ref{rmk: sphere case}, one can check that such a manifold $M$ is simply connected and has the homology of $\CP^3$. 
Then, the rest of the proof goes along the same line as the one for Theorem \ref{thm: classification}.
\end{remark}

\subsection*{Acknowledgements}
The authors thank Otto van Koert and Kai Zehmisch for valuable comments, and in particular Zhengyi Zhou for pointing out a gap in the first version of this paper.
The first author was supported by the National Research Foundation of Korea (NRF) grant funded by the Korea government (MSIT) (No. NRF-2021R1F1A1060118, RS-2025-23524132).
The second author was supported by Japan Society of Promotion of Science KAKENHI Grant numbers JP20K22306, JP22K13913, JP24H00182. 

\bibliographystyle{alpha} 
\bibliography{ref}

\end{document}